%Last Edited 11/13/18 by SF

\documentclass[12pt]{amsart}
\usepackage{amsmath,amsthm,amssymb,amscd}
\usepackage{graphicx}
\usepackage[usenames,dvips]{color} % for LaTex
\usepackage[bookmarks=false,colorlinks,linkcolor=blue,citecolor=green]{hyperref}

\usepackage[all]{xy}
\definecolor{mains}{cmyk}{.3, .85, .75, 0}

\addtolength{\textheight}{48pt}
\addtolength{\textwidth}{84pt}
\addtolength{\hoffset}{-42pt}
\addtolength{\voffset}{-24pt}
%%%%%%%%%%%%%%%%%%%%%%%%%%%%%%%%

\baselineskip=18pt

\begin{document}
\newcommand{\CC}{\mathcal{C}}
\newcommand{\cA}{{\mathcal A}}
\newcommand{\blambda}{{\boldsymbol{\lambda}}}
\newcommand{\bDelta}{{\boldsymbol{\Delta}}}
\newcommand{\bdelta}{{\boldsymbol{\delta}}}
\newcommand{\cQ}{{\mathcal Q}}
\newcommand{\dQ}{{\mathcal Q}^0}                               %[!CHANGED!]
\newcommand{\qQ}{{\widehat{\mathcal Q}}}
\newcommand{\bT}{{\overline{\mathcal T}}}
\newcommand{\T}[1]{{\mathcal T}_#1}                                         % Tree space
\newcommand{\pT}[1]{{\bf T}_{#1}(\R)}
\newcommand{\BHV}[1]{{\rm BHV}_{#1}}                    % BHV space of trees
\newcommand{\iBHV}[1]{{\rm BHV}_{#1}^+}                 % BHV space of trees [!CHANGED!]
\newcommand{\OM} [1] {\ensuremath{{\overline{\mathcal M}}{_{0, #1}^{{\rm or}}(\R)}}}
     %orientable cover real
\newcommand{\KM} [1] {\ensuremath{{\over  line{\mathcal M}}{_{0, #1}^{{\rm kap}}(\R)}}}
     %kapranov double cover
\newcommand{\aff}  {{\rm Aff}}                 % Affine aut(+ dilation)
\newcommand{\conf}{{\rm Config}}                 %Configuration space

\newcommand{\Con} {{\rm Config}}
\newcommand{\M}  [1] {\ensuremath{{\overline{\mathcal M}}{_{0, #1}(\R)}}}   %compact
\newcommand{\cM} [1] {\ensuremath{{\mathcal M}_{0, #1}}}                    %open complex
\newcommand{\CM} [1] {\ensuremath{{\overline{\mathcal M}}{_{0, #1}}}}       %cpt. complex
\newcommand{\oM} [1] {\ensuremath{{\mathcal M}_{0, #1}(\R)}}                %open
\newcommand{\oZ} [1] {\ensuremath{{\mathcal Z}^{#1}}}                       %open cyclohedral
\newcommand{\PSL} {\Pj\Sl_2(\R)}                           %PGL_2(R)
\newcommand{\PGL} {\Pj\Gl_2(\R)}                           %PGL_2(R)
\newcommand{\PGLC} {\Pj\Gl_2(\Cx)}                         %PGL_2(C)
\newcommand{\CP} {\Cx\Pj^1}                                %CP^1
\newcommand{\RP} {\R\Pj^1}                                 %RP^1
\newcommand{\Cx} {{\mathbb C}}                             %complex numbers
\newcommand{\R} {{\mathbb R}}                              %reals
\newcommand{\D} {{\mathcal D}}                             %poincare disk
\newcommand{\Q} {{\mathbb Q}}                              %rationals
\newcommand{\N} {{\mathbb N}}                              %naturals
\newcommand{\Z} {{\mathbb Z}}                            %integers
\newcommand{\Pj} {{\mathbb P}}                             %projective space
\newcommand{\La} {{\mathbb L}}                             %lattice quotient
\newcommand{\Sg} {\mathbb S}                               %symmetric group
\newcommand{\SO} {{\rm SO}}                                %s.ortho group
\newcommand{\Gl} {{\rm Gl}}                                %g linear group
\newcommand{\Sl} {{\rm Sl}}                                %s linear group
\newcommand{\Op}{{\mathcal{O}}}                            %operad
\newcommand{\la}{{\langle}}                                %left wide  <
\newcommand{\ra}{{\rangle}}                                %right wide >

\newcommand{\J} {\mathcal{J}}

\newcommand{\K} {\mathcal{K}}
\newcommand{\Pg} {\mathcal{P}}
\newcommand{\Cox}{{G}}                                %coxeter graph
\newcommand{\PG}[1] {\mathcal{P}G_{#1}}               %polytope of gamma
\newcommand{\C} [1]  {\mathcal C{#1}}                      %cox complex
\newcommand{\Cp} [1] {{\Pj\C{}}({#1})}                     %Projective cox complex
\newcommand{\Cm} [1] {\C{} ({#1})_{\#}}                    %min blowup of cox complex
\newcommand{\Cpm} [1] {\Pj{}\Cm{#1}}                       %min blowup of Projective cox complex
\newcommand{\Min} [1] {\textrm{Min}(\C{#1})}               %minimal building set

\newcommand{\Sp}{\mathcal S}

\newcommand{\sn}[1]{{\rm CSN}_{#1}}
\newcommand{\n}[1]{\mathfrak{S}_{#1}}
\newcommand{\mc}{\mathcal}
\newcommand{\ms}{\mathcal}
\newcommand{\bd}{overlap space }

\newcommand{\KG} {{\mathcal{K}} G}

\newcommand{\Tr} {{\tau}}                                  %triangulation
\newcommand{\tri}{{\triangle}}                             %triangle

\newcommand{\cB}{{\mathcal B}}
\newcommand{\cC}{{\mathcal C}}

\newcommand{\iDelta}{{\overset{o}\Delta}}
\newcommand{\wsM}{{\widetilde{\sf M}}}
\newcommand{\orb}{{\rm orb}}
\newcommand{\PGl}{{\rm PGl}}
\newcommand{\SP}{{\rm SP}}
\newcommand{\Ot}{{\rm O}}
\newcommand{\reg}{{\rm reg}}
\newcommand{\Conf}{{\rm Config}}                 %Configuration space
\newcommand{\oOM} [1] {\ensuremath{{\mathcal M}_{0,#1}^{{\rm or}}(\R)}}                 %open real moduli
\newcommand{\tsM} [1]{\ensuremath{{\widetilde{\sf M}}{_{0, #1}(\R)}}}
\newcommand{\gM} [1] {{\sf M}_{#1}}
\newcommand{\tgM} [1] {\widetilde{\sf M}_{#1}}

\newcommand{\tQ}{\widetilde{\cQ}}
\newcommand{\sQ}{{\sf Q}}
\newcommand{\tsQ}{{\widetilde{\sf Q}}}
\newcommand{\balpha}{{\boldsymbol{\alpha}}}
\newcommand{\B}{{\bf 1}}
\newcommand{\e}{{\bf e}}
\newcommand{\y}{{\bf r}}
\newcommand{\x}{{\bf x}}
\newcommand{\tx}{{\bar{x}}}
\newcommand{\bn}{{\bf n}}
\newcommand{\metmap}{\phi}
\newcommand{\half}{{\textstyle{\frac{1}{2}}}}

\newcommand{\hide}[1]{}

\newcommand{\minibreak}{
\noindent\rule[0pt]{\textwidth}{1pt}
\noindent\rule[14pt]{\textwidth}{2pt}}

%-------------------------------------------------------------------
% Begin ENVIRONMENT DEFINITIONS
%-------------------------------------------------------------------

\theoremstyle{plain}
\numberwithin{equation}{section}
\newtheorem{thm}{Theorem}
\newtheorem{prop}[thm]{Proposition}
\newtheorem{cor}[thm]{Corollary}
\newtheorem{lem}[thm]{Lemma}
\newtheorem{conj}[thm]{Conjecture}
\newtheorem{prob}[thm]{Problem}
\newtheorem{ques}[thm]{Question}

\theoremstyle{definition}
\newtheorem{defn}{Definition}
\newtheorem*{exmp}{Example}

\theoremstyle{remark}
\newtheorem*{rem}{Remark}
\newtheorem*{hnote}{Historical Note}
\newtheorem*{nota}{Notation}
\newtheorem*{ack}{Acknowledgments}

\def\ve#1{\mathchoice{\mbox{\boldmath$\displaystyle\bf#1$}}
{\mbox{\boldmath$\textstyle\bf#1$}}
{\mbox{\boldmath$\scriptstyle\bf#1$}}
{\mbox{\boldmath$\scriptscriptstyle\bf#1$}}}
\newcommand{\conv}{\mbox{conv}}
\newcommand{\Tn}{\mathcal T_n}

\newcommand{\squeezelist}{\setlength{\itemsep}{-2pt}}

\title{Split Network Polytopes and Network Spaces}
\author{Satyan L.\ Devadoss, Cassandra Durell, and Stefan Forcey}

%% then use \addressmark to match authors to institutions here
\address{Satyan Devadoss, Department of Mathematics, University of San Diego}
\address{Cassandra Durell, Department of Mathematics, University of Akron}
\address{Stefan Forcey, Department of Mathematics, University of Akron}
%\received{November 15, 2018}

\begin{abstract}
%\abstract{
 Phylogenetics begins with reconstructing biological family trees from genetic data. Since Nature is not limited to tree-like histories, we use  \emph{networks}  to organize our data, and have discovered new polytopes, metric spaces, and simplicial complexes that help us do so. Moreover, we show that the space of phylogenetic trees dually embeds into the \emph{Balanced Minimum Evolution} polytope, and use this result to find a complex of faces within the subtour-elimination facets of the \emph{Symmetric Traveling Salesman} polytope, which is shown to be dual to a quotient complex in network space.
 %}
\end{abstract}
\keywords{polytopes, phylogenetics, trees, networks, metric spaces}
\maketitle

%%%%%%%%%%%%%%%%%%%%%%%%%%%%%%%%%%%%%%%%%%%%%%%%%%%%%%%%%%%%%%%%%%%%%%%%%%%%%%%%%%%%%
\section{Introduction}

A classical problem in computational biology is the inference of a phylogenetic tree from the aligned DNA sequences of $n$ species. One can construct a distance between two species, typically via a model of mutation rates, using the probabilistic calculation pairwise on taxa. Such information can be encoded by
an $n \times n$ real symmetric, nonnegative matrix called a \emph{dissimilarity matrix}, often given as a ${n \choose 2}$-dimensional \textit{distance vector} $\mathbf{d}$ with entries $d_{ij}$ in lexicographic order.  The classical phylogenetic problem is then to reconstruct a tree (possibly with weighted edges) that
represents this matrix. We say that $\mathbf{d}_t$ is \textit{additive} when the entries correspond perfectly to the summed edge values of a weighted tree $t$.

Oftentimes, however, dissimilarity matrices are not additive metrics. This may be due to horizontal gene
transfer, recombination, or gene duplication  \cite{hrs}. Indeed, assuming a tree model and applying phylogenetic inference tools on the data can be misleading \cite{bm}.   In such cases, the underlying evolutionary relationships are better represented by a \emph{split network}, a generalization of a tree in which multiple parallel edges signify divergence; see Figure~\ref{f:network}.  From a biological context, network representations are important  visualization tools at the beginning of data analysis, allowing researchers to see several hypothesized trees simultaneously, while looking for evidence of hybridization or reticulation.

\begin{figure}[h]
\centering{\includegraphics[width=\textwidth]{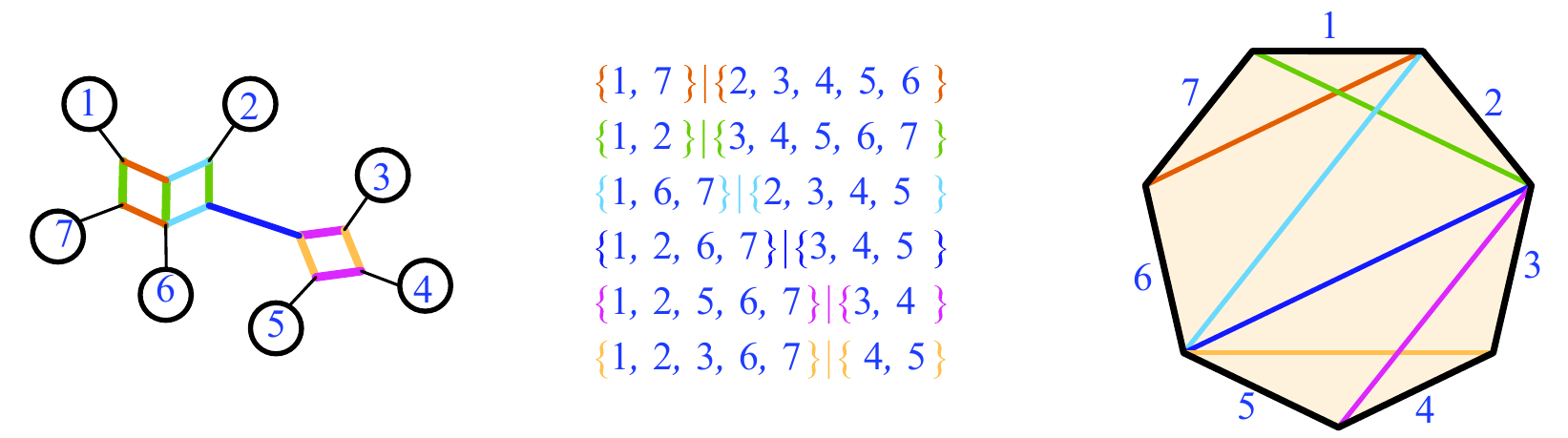}}
\caption{ A split system, in the center, with its circular split network, and its polygonal representation.The trivial splits are assumed to be included. This network is externally refined, so no bridges can be added.}
\label{f:network}
\end{figure}

In practice, the construction of a  split network from an arbitrary dissimilarity matrix tends to yield an overcomplicated system. Instead, a common heuristic is to choose a circular ordering compatible with most of the splits in order to produce a \emph{circular split network}, adding the remaining splits at a
later time \cite{hs}.  Bryant and Moulton introduced the \emph{neighbor-net} algorithm  in 2004, equipped to construct such a circular split network from dissimilarity matrices satisfying the Kalamanson condition \cite{bm}. Neighbor-net is a greedy algorithm, but the circular split networks output by the algorithm are informative for exploring conflicting signals in data.

Applications of our new polytopes begin with linear programming. We have discovered a series of polytopes whose vertices are phylogenetic networks, filtering between the Balanced Minimal Evolution (BME) polytopes and the Symmetric Traveling Salesman polytopes (STSP). We are using combinations of the simplex method, branch and bound, and cutting planes on these polytopes in order to choose precisely which network best fits the data. The discovery of large families of facets in every dimension allows us to attack the BME problem.

%%%%%%%%%%%%%%%%%%%%%%%%%%%%%%%%%%%%%%%%%%%%%%%%%%%%%%%%%%%%%%%%%%%%%%%%%%%%%%%%%%%%%
\section{Trees and Networks}

We begin with the set $[n] = \{1,2,\dots,n\}$ used to number $n$ taxa, species, or genes.  A \textit{split} is a partition of $[n]$ into two parts, denoted $A|B$. A split is called \emph{nontrivial} when both parts $A$ and $B$ have more than one element. A \textit{split system} is a set of splits. We will always assume that our split systems contain all the trivial splits, with one part having a single element. Split systems may be drawn as connected graphs called \textit{split networks} in which the degree-one nodes  (leaves) are given the labels $1\dots n,$ and nodes of higher degree have no label. The interior (un-leafed) edges are drawn in sets of parallel edges which together represent a split: by cutting such a set of edges the graph separates into the parts of the split (Figure~\ref{f:network}). If a split is represented by a single edge in the network, we call it a \emph{bridge}. We say a split system $s'$ \emph{refines}  $s$ when $s' \supset s.$ In the network picture some splits of $s'$ are collapsed (the parallel edges are assigned length zero) to achieve $s.$

A \textit{phylogenetic tree} is a circular split network whose splits form a tree. That is, a network for which each interior edge is a bridge: cutting it disconnects the graph into two smaller graphs, called \textit{clades}.  A \textit{circular} or \textit{exterior planar} split network is one whose graph may be drawn on the plane without edge crossings, with all leaves on the exterior.

There exists a \emph{dual polygonal  representation} of a circular split network:  Given a circular split system with a circular ordering $c$ of the species, consider a regular $n$-gon, with the edges cyclically labeled according to $c$.  For each split, draw a diagonal partitioning the appropriate edges (Figure
\ref{f:network}). Note that the diagonals which represent bridges in the split network  are \emph{noncrossing}, that is the diagonal does not intersect any other diagonal. A \textit{fully reticulated} split network is one with no  bridges.

\begin{defn}
An \emph{externally refined split network} $s$ is such that there is no split network $s'$ both refining $s$ and possessing more bridges than $s$.\footnote{In the polygonal representation, there are no non-crossing diagonals that can be added.}  Note that an externally refined network can be a refinement of another externally refined network. An \emph{externally refined phylogenetic tree} has all internal nodes of degree three (usually called a \textit{binary} tree).
 \end{defn}

 Another generalization of an unrooted phylogenetic tree is an (unrooted) \emph{phylogenetic network}. The following definitions are from \cite{Gambette2017}: A phylogenetic network is a simple connected graph with exactly $n$ labeled nodes of degree one, all other unlabeled nodes of degree at least three, and every cycle of length at least four.\footnote{Cycles here are simple cycles, with no repeated nodes other than the start.} If every edge is part of at most one cycle, then the network is called \emph{1-nested}. If every node is part of at most one cycle, the network is called \emph{level-1}. If that is true and  all the unlabeled non-leaf nodes also have degree three, then the network is called \emph{binary level-1}. Level-1 networks as a set contain the level-0 networks, which are the phylogenetic trees. Notice that a phylogenetic tree is both a split network and a level-0 phylogenetic network.

 A \emph{minimal cut} of a phylogenetic network is a subset of the edges which, when removed, leaves two connected components. The edge set is minimal in the sense that no more edges are removed than is necessary for the disconnection. The split displayed by such a cut is the two sets of leaves of the two connected components.  A split is \emph{consistent} with a phylogenetic network if there is a minimal cut displaying that split. The system of all such splits for a phylogenetic network $N$ is called $\Sigma(N).$

 In Figure~\ref{my_splits_ex_SN}, we show a binary level-1 network $N$, and the associated maximal split network $\Sigma(N)$. There is a close relationship between level-1 (and thus 1-nested) networks and circular split networks. In \cite{Gambette2017}, it is shown that a split network $s$ is circular if and
only if there exists an unrooted 1-nested network $N$ such that $s \subset \Sigma(N).$ For instance the split network $s$ in Figure~\ref{f:network} has splits a subset of those in $\Sigma(N)$ seen in Figure~\ref{my_splits_ex_SN}.

 \begin{figure}[h]
\centering{\includegraphics[width=\textwidth]{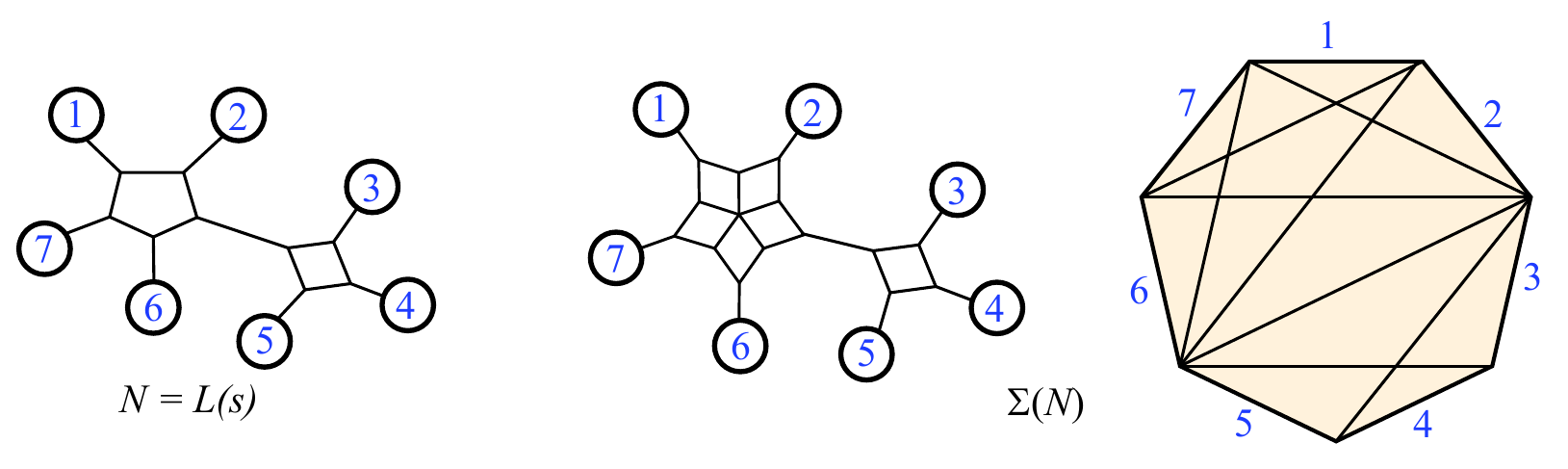}}
\caption{ A level-1 phylogenetic network $N$ with its associated maximal circular split system $\Sigma(N)$, shown both as a network and polygonal representation. Here, $N$ is the the image $L(s)$ for $s$ the split network from Figure~\ref{f:network}.}
\label{my_splits_ex_SN}
\end{figure}
\begin{defn}
If $s$ is a circular split system, then there is a simple way to associate to $s$ a specific 1-nested phylogenetic network denoted as $L(s)$.  Construct this network $L(s)$ as follows:  begin with a split network diagram  of $s$ and  consider the diagram as a drawing of its underlying planar graph. Recall that the leaves are on the exterior.  First, delete all the edges that are not adjacent to the exterior of that graph.  And second, smooth away any resulting degree-2 nodes.
\end{defn}
As an example, Figure~\ref{f:network} shows $s$, whereas $L(s)$ is in Figure~\ref{my_splits_ex_SN}. We see that $L(s)$ displays all the splits of $s$, and has the same bridges as $s$.  Note that in \cite{Gambette2017} the authors define a similar function called $N$. Their function is equivalent to ours, if the definition in \cite{Gambette2017} is modified so as to not depend on $k$-marguerites.

\begin{defn}
A \emph{weighting} (or \emph{metric}) of a split system $s$ is a function $w:s\to\mathbb{R}_{\ge 0}$.  Given such a weighting, we can derive a metric $d_s$ on $[n],$ via
$$d_s(i,j) = \sum_{i\in A, j\in B} w(A|B)\,,$$
where the sum is over all splits of $s$ with $i$ in one part and $j$ in the other. The metric is often referred to as the distance vector $\mathbf{d_s}.$ We define the total weight of the network to be the sum of all the weights: $W(s) = \sum_{A|B \in s} w(A|B)$.
\end{defn}

Often in practice each split is assigned a positive weight, since when splits are assigned zero weights, the system can be equated to the same system minus those splits. This latter equating is important  since it is necessary not just to understand individual network structures, but  the relationships between them.  Billera, Holmes, and Vogtmann laid the foundation for this process by constructing a geometric space $\BHV{n}$ of such metric trees with $n$ leaves \cite{bhv}.  This space is contractible (in fact, a cone),
formed by gluing $(2n-5)!!$ orthants $\R_{\geq 0}^{n-3}$, one for each type of labeled binary tree. As the weights go to zero, we get degenerate trees on the boundaries of the orthants, where two boundary faces are identified when they contain the same degenerate trees. The space $\BHV{n}$ is not a manifold but a cone over a relatively singular simplicial complex.

Devadoss and Petti recently constructed a \emph{geometric space} $\sn{n}$ of metric circular split networks with $n$ labeled leaves \cite{dev-petti}: A circular split network with $n$ leaves has at most ${n \choose 2} -n = n(n-3)/2$ splits compatible with a specific ordering. From a polygonal perspective, this is the maximal number of diagonals on an $n$-gon.  Thus, the vector of interior edge lengths $(l_1, \dots, l_{n(n-3)/2} )$ specifies a point in the orthant $[0, \infty)^{n(n-3)/2}$, defining coordinate charts for the space of such networks.  That is, to each  point in this orthant, we associate the unique network which is combinatorially equivalent but with differing edge lengths, specified by the coordinates of that point.

The space of networks is assembled from $(n-1)!/2$ orthants, each of which corresponds to a unique circular ordering of the $n$ species, up to rotation and reflection.  As the interior edge lengths go to zero, we get degenerate networks representing the common boundary faces of these orthants. The spaces CSN(4) and BHV(4) are shown in Figure~\ref{f:duality}.
The link of the origin $\n{n}$ of $\sn{n}$ is the union of the set of points in each orthant with internal edge lengths of networks  that sum to 1.

\begin{thm} \cite{dev-petti}
The space $\n{n}$ is a connected simplicial complex of dimension  ${n \choose 2}-n-1$, with one $k$-simplex for every labeled $n$-gon with $k+1$ diagonals.
\end{thm}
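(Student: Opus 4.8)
\emph{Proof proposal.} The plan is to analyze $\n{n}$ through the orthant decomposition of $\sn{n}$ described above, in close analogy with the proof that $\BHV{n}$ is a cone over a simplicial complex. Each of the $(n-1)!/2$ orthants $O_c\cong[0,\infty)^N$ (one per circular ordering $c$, with $N=n(n-3)/2$) meets the link of the origin in the affine slice $\{x\in O_c:\sum_i x_i=1\}$, which is a standard $(N-1)$-simplex $\Delta_c$ whose $N$ vertices are the $N$ diagonals compatible with $c$, i.e.\ the nontrivial splits of the maximal circular split system for $c$. A $k$-face of $\Delta_c$ is the sub-simplex spanned by a choice of $k+1$ of those diagonals. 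Since $N=n(n-3)/2=\binom{n}{2}-n$ (the count recorded above) and every circular split system has at most $N$ splits (all of them must be drawable simultaneously on one $n$-gon), the top-dimensional simplices are exactly the $\Delta_c$, so $\dim\n{n}=N-1=\binom{n}{2}-n-1$.

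Next I would pin down the gluing. Following the convention used for $\BHV{n}$, two boundary faces are identified precisely when they consist of the same degenerate networks; collapsing the weight-zero diagonals of a face of $O_c$ leaves a weighted split system supported on the surviving diagonals, so a face of $O_c$ and a face of $O_{c'}$ are identified if and only if their surviving split systems coincide as subsets of $2^{[n]}$. Hence each simplex of $\n{n}$ is determined by a single circular split system $S$ (a set of splits admitting a common compatible ordering), and has dimension $|S|-1$; in the language of the statement, $S$ is recorded as $|S|$ diagonals drawn on any one of the labeled $n$-gons compatible with it, distinct such polygons carrying the same subcollection being glued to a single simplex. To see that the quotient is an honest simplicial complex (not merely a $\Delta$-complex), I would verify the two standard conditions: the vertices of $\Delta_S$—namely the elements of $S$—are distinct, and $\Delta_S\cap\Delta_T=\Delta_{S\cap T}$, using that any subsystem of a circular split system is again circular (restrict a compatible ordering) and that a point lying in both $\Delta_S$ and $\Delta_T$ has support in $S\cap T$.

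Finally, for connectivity I would first note that $\n{n}$ is covered by its top-dimensional simplices: every circular split system extends to the full set of diagonals of some compatible labeled $n$-gon, so every simplex is a face of some $\Delta_c$ and $\n{n}=\bigcup_c\Delta_c$. It therefore suffices to connect the $\Delta_c$. The $(n-1)!/2$ cyclic orders form a connected graph under transpositions of cyclically adjacent labels, and if $c'$ is obtained from $c$ by swapping two adjacent labels $a,b$, then the nontrivial split $\{a,b\}\mid[n]\setminus\{a,b\}$ is compatible with both, so $\Delta_c$ and $\Delta_{c'}$ share a common vertex; chaining these overlaps shows $\bigcup_c\Delta_c$ is connected. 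The routine parts of the argument are the simplex count and the dimension; the hard part will be the bookkeeping of the face identifications—confirming that ``same degenerate network'' is exactly ``same split system,'' that this forces identified faces to agree in dimension and to be glued by the identity on split-weight coordinates, and hence that the resulting cell structure really is simplicial with the stated indexing (in particular that subcollections of diagonals living on several $n$-gons contribute a single simplex).
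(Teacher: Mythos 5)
The paper itself does not prove this theorem---it is imported from \cite{dev-petti}, with only the orthant construction of $\sn{n}$ and the gluing rule (``boundary faces are identified when they contain the same degenerate networks'') sketched beforehand---and your argument fills in the details along exactly those lines: slicing each orthant to a simplex $\Delta_c$, checking that the identifications are indexed by circular split systems so that $\Delta_S\cap\Delta_T=\Delta_{S\cap T}$, and connecting the top cells via shared cherry-splits under adjacent transpositions. This is correct and is essentially the same approach as the cited source; your explicit handling of the fact that one split system may be drawn on several labeled $n$-gons (yet contributes a single simplex) is the right reading of the theorem's phrasing.
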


Unlike the link of the origin of $\BHV{n}$, whose homotopy structure has been known for several decades, little is known about the topology of $\n{n}$.
A partial key lies with the novel gluing of the chambers of $\n{n}$: we have recently shown that two chambers of $\n{n}$ can intersect along a face of \emph{at most dimension} $\binom{n-2}{2} - 1.$

%%%%%%%%%%%%%%%%%%%%%%%%%%%%%%%%%%%%%%%%%%%%%%%%%%%%%%%%%%%%%%%%%%%%%%%%%%%%%%%%%%%%%
\section{Polytopes}

We now turn to a wonderful family of polytopes which organize these split networks.  Our new polytope collection is nested between the STSP and BME polytopes.

\begin{defn}
For each circular ordering $c$ of $[n]$, the \textit{incidence vector} ${\mathbf x}(c)$  has ${n \choose 2}$ components. The component $x_{ij}= 1$ if  $i$ and $j$ are adjacent in $c$, and $x_{ij}= 0$ if not. The $n(n-3)/2$-dimensional \emph{Symmetric Traveling Salesman Polytope} STSP($n$) is the convex hull of these vectors.
\end{defn}

\noindent
A  circular ordering $c$ is \textit{consistent} with a circular split system $s$ if a planar network of $s$ may be drawn so its leaves lie on the exterior in the order given by $c$. Examples are calculated in Figure~\ref{f:filter} and their associated STSP(4) is shown in Figure~\ref{f:duality}.

The Balanced Minimal Evolution Polytope BME($n$) was first studied in 2008 \cite{Rudy2008}. We have recently found a simple description as follows \cite{forcey2015facets}:

\begin{defn}
For each given phylogenetic tree $t$ with $n$ leaves, the \textit{vertex vector} $\mathbf{x}(t)$ has ${n \choose 2}$ components $x_{ij}(t) = 2^{n-3-b_{ij}}$ where $b_{ij}$ is the number of nontrivial bridges on the path from leaf $i$ to a different leaf $j.$
The convex hull of all the  $(2n-5)!!$ vertex vectors (for all binary trees $t$ with $n$ leaves), is the polytope BME$(n),$  of dimension ${n \choose 2}-n.$
\end{defn}

We define new families of polytopes by assigning  vectors to each externally refined circular split network $s$, and thus to each binary level-1 phylogenetic network.

\begin{defn}
The vector ${\mathbf x}(s)$ is defined to have components ${\mathbf x}_{ij}(s)$ for each unordered pair of leaves $i,j \in [n]$ as follows:
\begin{equation} \label{e:bmenkvert} {\mathbf x}_{ij}(s) = \begin{cases} 2^{k-b_{ij}} & \text{if there exists $c$ consistent with $s$; with $i,j$  adjacent in $c$,}\\ 0 & \text{otherwise.} \end{cases} \end{equation}
Here, $k$ is the number of bridges in $s$ and $b_{ij}$ is the number of bridges crossed on any path from $i$ to $j$.
\end{defn}

Note that though both variables in the formula depend on $s$, they are determined entirely by $L(s).$   Thus two externally refined circular split systems with the same associated binary  level-1 network will have the same vector ${\mathbf x}.$

\begin{defn}
The convex hull of all the vectors ${\mathbf x}(s)$ for $s$ an externally refined circular network with $n$ leaves and $k$ bridges is the \emph{level-1 network polytope} BME($n,k$).
\end{defn}

\begin{thm} \label{cool}
The vertices of BME($n,k$) are the vectors ${\mathbf x}(s)$ corresponding to the distinct binary level-1 networks $L(s)$.  That is for each externally refined circular network $s$, with $n$ leaves and $k$ bridges, we get an extreme point of the polytope (but it is determined only by $L(s).$)  The dimension of BME($n,k$) is ${n \choose 2}-n.$
\end{thm}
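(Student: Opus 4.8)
The plan is to prove the theorem in three parts: an upper bound on the dimension coming from a ``Kraft equality'', a proof that every ${\mathbf x}(s)$ is a vertex via a supporting linear functional, and a matching lower bound on the dimension obtained by exhibiting enough affinely independent vertices.

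\emph{Part 1 (Kraft equality and the upper bound).} First I would prove that for every externally refined circular network $s$ with $n$ leaves and $k$ bridges, and every leaf $i$, one has $\sum_{j\ne i}{\mathbf x}_{ij}(s)=2^{k+1}$; equivalently, since both $k$ and the $b_{ij}$ depend only on $L(s)$, it suffices to show $\sum_j 2^{-b_{ij}}=2$ where $j$ ranges over the leaves that can be made adjacent to $i$ in some consistent circular ordering. For trees this is the classical Kraft equality: rooting a binary tree at the pendant edge of $i$ turns the other leaves into a rooted binary tree in which the depth of $j$ is exactly $b_{ij}+1$, so $\sum_j 2^{-(b_{ij}+1)}=1$. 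For general $L(s)$ I would extend this by induction on the number of reticulation cycles, using that cutting a bridge splits $s$ into two externally refined pieces and that passing through a reticulation cycle contributes a ``two-sided'' local factor that plugs into the same recursion. These $n$ linear equations have as coefficient matrix the unoriented incidence matrix of $K_n$, which has rank $n$ for $n\ge 3$ (since $K_n$ is connected and non-bipartite), so they are independent and $\dim \mathrm{BME}(n,k)\le \binom{n}{2}-n$. In particular all of the listed vectors lie in the affine subspace $E_k=\{x:\sum_{j\ne i}x_{ij}=2^{k+1}\ \forall i\}$, whose direction space $E_k^0$ has dimension $\binom{n}{2}-n$.

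\emph{Part 2 (each ${\mathbf x}(s)$ is a vertex, and $L(s)\mapsto {\mathbf x}(s)$ is injective).} Fix an externally refined $s_0$ with $k$ bridges, equip its splits with generic positive weights, and let $\mathbf{d}_{s_0}$ be the induced distance vector. The key claim is that $\langle \mathbf{d}_{s_0},{\mathbf x}(s)\rangle$, taken over all externally refined $s$ with $k$ bridges, is uniquely minimized exactly on those $s$ with $L(s)=L(s_0)$. Granting this, $-\mathbf{d}_{s_0}$ is a linear functional uniquely maximized at ${\mathbf x}(s_0)$ among the finitely many listed points, so ${\mathbf x}(s_0)$ is a vertex of their convex hull and distinct $L(s)$ yield distinct vertices. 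This claim is the ``balanced minimum evolution'' objective in the network setting, i.e.\ the consistency of circular split decomposition: a circular-decomposable metric recovers its supporting network, and genericity of the weights breaks the remaining ties. Making this precise — say, by writing $\langle \mathbf{d}_{s_0},{\mathbf x}(s)\rangle$ as a sum over the splits $A|B$ of $s_0$ of $w(A|B)$ times a quantity that is minimized precisely when $s$ ``displays'' $A|B$ — is the delicate step here; an alternative is to replace $-\mathbf{d}_{s_0}$ by a purely combinatorial functional built from $L(s_0)$ in the spirit of the vertex proof for $\mathrm{BME}(n)$ in \cite{forcey2015facets}.

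\emph{Part 3 (the lower bound).} The two extreme cases are already settled: an externally refined network with $n-3$ bridges must be a full triangulation, hence a binary tree — any extra diagonal would cross an existing one and demote both to non-bridges, leaving fewer than $n-3$ — so $\mathrm{BME}(n,n-3)=\mathrm{BME}(n)$, of dimension $\binom{n}{2}-n$; and a fully reticulated externally refined network is an $n$-cycle carrying all $\binom{n}{2}-n$ diagonals, which is consistent with a unique circular ordering whose vector is that ordering's STSP incidence vector, so $\mathrm{BME}(n,0)=\mathrm{STSP}(n)$, also of dimension $\binom{n}{2}-n$. For intermediate $k$ I would produce $\binom{n}{2}-n+1$ affinely independent vertices directly, starting from networks built out of a ``tree part'' of $k$ bridges attached to a fully reticulated region and varying the cyclic order of that region, the shape of the tree part, and which leaves lie in which part; equivalently, show that any linear functional constant on all these vectors is forced to be a combination of the $n$ Kraft functionals by chaining together the relations produced by ``quadrilateral flip'' moves between externally refined networks with the same number of bridges. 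Verifying that the chosen moves change exactly the four intended coordinates by equal amounts, and that collectively they span $E_k^0$, is the main obstacle of the whole argument.
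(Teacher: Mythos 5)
Your skeleton --- a Kraft-type equality for the upper bound on the dimension, a supporting distance-vector functional for vertexhood, and a separate lower bound on the dimension --- matches the paper's, and the conclusion of Part 1 (the $n$ independent equations $\sum_{j\ne i}x_{ij}=2^{k+1}$, hence $\dim \le \binom{n}{2}-n$) is correct. But the two steps you yourself flag as ``the delicate step'' and ``the main obstacle'' are genuine gaps, not deferred routine work, and they are exactly where the paper's single organizing fact does the job: $\mathbf{x}(s)$ is the sum of the STSP vertices $\mathbf{x}(c)$ over the circular orderings $c$ consistent with $s$. In Part 2 you assert that a generically weighted $\mathbf{d}_{s_0}$ is uniquely minimized on $\{s:L(s)=L(s_0)\}$ but give no mechanism for proving it; the paper gets it because any tour $c$ crosses each split $A|B$ of $s_0$ in at least two adjacent pairs, with equality for all splits exactly when $c$ is consistent with $s_0$ (the Kalmanson property), so $\mathbf{d}_{s_0}\cdot\mathbf{x}(c)$ is minimized over STSP$(n)$ precisely at the consistent orderings, and hence the sum of $2^k$ such values is minimized over $k$-bridge networks $s'$ precisely when every ordering consistent with $s'$ is consistent with $s_0$, i.e. when $L(s')=L(s_0)$. ``Consistency of circular split decomposition'' by itself is a statement about recovering a network from its own metric; it does not give you the comparison of objective values across all competing $k$-bridge networks that vertexhood requires.

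In Part 3 the two endpoint cases are fine, but for intermediate $k$ you have only a plan: you neither exhibit the $\binom{n}{2}-n+1$ affinely independent vertices nor verify that the flip moves span the direction space $E_k^0$, and the latter is nontrivial --- it is not even clear a priori that a ``quadrilateral flip'' between externally refined $k$-bridge networks changes only four coordinates, since altering the cyclic structure can change $b_{ij}$ for many pairs simultaneously. The paper sidesteps this entirely by a squeeze: BME$(n,k)$ is nested (after scaling) between BME$(n)$ and STSP$(n)$, both of dimension $\binom{n}{2}-n$, so no hunt for independent vertices is needed. Finally, your inductive proof of the Kraft equality via ``two-sided local factors'' is hand-waved where the STSP-sum decomposition gives it in one line: each of the $2^k$ consistent orderings contributes exactly $2$ to $\sum_{j\ne i}x_{ij}$.
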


\begin{proof}
To show that the vectors thus calculated are extreme in their convex hull, we use the fact that each is the sum of the vertices of the STSP  which correspond to the circular orderings consistent with that network. Let ${\mathbf d}_s$ be the distance vector whose $i,j$ component is the path length between those leaves on $s.$ Then the linear functional corresponding to the distance vector is simultaneously minimized at each of these (consistent circular ordering) vertices of the STSP, and thus uniquely minimized at the vertex  ${\mathbf x}(s)$.

For the dimension we use the fact that each polytope BME($n,k$) is nested between STSP($n$) and BME($n$). We also have the following: for each leaf $j=1,\dots,n$ these vertices ${\mathbf x}(s)$ satisfy $\sum_{i=1, i\ne j}^n x_{ij}= 2^{k+1}\, ,$ where $k$ is the number of bridges (non-crossing diagonals) in the diagram. More details of proofs are found in \cite{bmenk1}.
\end{proof}

\begin{cor}
Restricting BME($n,k$) to the phylogenetic trees,  where $k=n-3$, recovers the polytopes BME($n$). Restricting BME($n,k$) to the fully reticulated networks, where $k=0$, turns out to recover STSP($n$).
\end{cor}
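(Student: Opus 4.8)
The plan is to reduce both statements, via Theorem~\ref{cool}, to a combinatorial classification of binary level-1 networks on $n$ leaves according to their number of (nontrivial) bridges, together with a check that in the two extreme regimes the vertex vector \eqref{e:bmenkvert} collapses to the known BME vertex vector, respectively the STSP incidence vector. So the first thing I would do is the classification. In a binary level-1 network $N$ every internal edge is either an edge of one of its pairwise vertex-disjoint cycles or a nontrivial bridge; using that the $c$ cycles are vertex-disjoint (so the cycle rank of $N$ is $c$) and that every unlabeled node has degree three, a standard Euler-characteristic count gives $|V(N)|=2n+2c-2$ and $|E(N)|=2n+3c-3$, so that the number of nontrivial bridges equals $(n+3c-3)-\sum_{i=1}^{c}\ell_i$, where $\ell_1,\dots,\ell_c\ge 4$ are the cycle lengths; in particular it is at most $n-c-3$. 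Hence it equals $n-3$ only if $c=0$, i.e.\ $N$ is a binary phylogenetic tree; and it equals $0$ only if $c\le 1$ — for two or more vertex-disjoint cycles in a connected graph are joined by a path, which contributes a nontrivial bridge — and then necessarily $c=1$ with the single cycle of length $n$, so that $N$ is an $n$-cycle carrying one pendant leaf at each of its $n$ nodes. These last networks are in bijection with circular orderings of $[n]$; write $N_c$ for the one realizing the ordering $c$.

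For $k=n-3$: by Theorem~\ref{cool} the vertices of BME($n,n-3$) are the vectors $\mathbf{x}(s)$ indexed by the distinct binary level-1 networks $L(s)$ with $n-3$ bridges, which by the classification are exactly the binary trees $t$; and every binary tree $t$ does occur, since $t$ itself is an externally refined circular split network with $n-3$ bridges and $L(t)=t$. It remains to see that $\mathbf{x}(s)=\mathbf{x}(t)$ whenever $L(s)=t$. Since $\mathbf{x}(s)$ depends only on $L(s)$, it is enough to treat $s=t$, and then the exponent $b_{ij}$ in \eqref{e:bmenkvert} is the number of nontrivial bridges on the $i$--$j$ path of $t$, i.e.\ exactly the $b_{ij}$ of the BME vertex definition. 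No entry is zero, because for every pair of leaves there is a circular ordering consistent with $t$ in which they are cyclically adjacent: draw the $i$--$j$ path as a horizontal segment and route every side subtree to the same side of it, so that $i$ and $j$ become consecutive along the outer boundary. Hence $\mathbf{x}_{ij}(s)=2^{(n-3)-b_{ij}}=x_{ij}(t)$ for all $i,j$, and BME($n,n-3$) = BME($n$).

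For $k=0$: again by Theorem~\ref{cool} the vertices of BME($n,0$) are indexed by the binary level-1 networks with no bridge, hence by the networks $N_c$, hence by circular orderings of $[n]$; each $N_c$ does occur, for example as $L(\Sigma(N_c))$. For such an $s$ one has $b_{ij}=0$ identically (there are no bridges to cross), so \eqref{e:bmenkvert} collapses to the $0/1$ vector whose $ij$-entry is $1$ exactly when some circular ordering consistent with $s$ makes $i,j$ adjacent. Because $\mathbf{x}(s)$ depends only on $L(s)=N_c$, I may compute it with $s=\Sigma(N_c)$, the full circular split system of $c$ — all diagonals of the $n$-gon labelled by $c$. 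Any ordering $c'$ consistent with $\Sigma(N_c)$ would force $\Sigma(N_c)\subseteq\Sigma(N_{c'})$, which is impossible on cardinality grounds (both systems have $\binom{n}{2}-n$ elements) unless $c'=c$ up to rotation and reflection; so the $ij$-entry is $1$ precisely when $i$ and $j$ are adjacent in $c$, i.e.\ $\mathbf{x}(s)=\mathbf{x}(c)$. Therefore BME($n,0$) = STSP($n$).

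I expect the classification of binary level-1 networks by bridge number to be the crux — in particular the bridgeless case, where pinning down that $N$ must be a single $n$-cycle with pendant leaves uses both the Euler count and the vertex-disjointness of the cycles. The two supporting points, namely that \eqref{e:bmenkvert} has no zero entry on a binary tree (the folding argument) and that the full split system $\Sigma(N_c)$ is consistent only with $c$, are short, and everything else is bookkeeping on top of Theorem~\ref{cool}.
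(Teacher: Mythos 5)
The paper states this corollary without any proof, treating it as immediate from Theorem~\ref{cool} and the definitions, and your argument is a correct, complete filling-in of exactly that route: the Euler-characteristic classification of binary level-1 networks by bridge count (with vertex-disjointness of cycles ruling out $c\ge 2$ in the bridgeless case), the folding argument showing no entry of $\mathbf{x}(t)$ vanishes for a binary tree, and the rigidity of $\Sigma(N_c)$ under consistent orderings are all sound. Since this matches the intended justification rather than departing from it, no further comparison is needed.
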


\begin{thm}
Every $n$ leaved 1-nested network $S$ with $m$ bridges corresponds to a face $F(S)$ of each BME($n,k$) polytope for $k\le m.$  The vertices of the face $F(S)$ are all the binary level-1 $k$-bridge networks $S'$ whose splits refine those of $S$; that is, $\Sigma(S)\subset\Sigma(S')$.
\end{thm}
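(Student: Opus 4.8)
The plan is to exhibit, for each $k\le m$, one linear functional on BME($n,k$) whose minimizing face is exactly $F(S)$, adapting the argument for Theorem~\ref{cool} but using a distance vector that records \emph{all} of $\Sigma(S)$ rather than the bridges of a single tree. I would fix any strictly positive weighting $w\colon\Sigma(S)\to\R_{>0}$, put $W:=\sum_{\sigma\in\Sigma(S)}w(\sigma)$, let $\mathbf{d}_S$ be the induced metric, and begin from the elementary ``tour length'' identity: for a circular ordering $c$ of $[n]$,
\[
\langle \mathbf{d}_S,\ \mathbf{x}(c)\rangle\;=\;\sum_{A|B\in\Sigma(S)}w(A|B)\,\delta_c(A|B),
\]
where $\delta_c(A|B)$ is the number of edges of the cyclic tour $c$ whose two endpoints lie in different parts of the split $A|B$. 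Since a nontrivial split always has $\delta_c\ge 2$, this sum is at least $2W$, with equality precisely when every split of $\Sigma(S)$ contributes exactly $2$, that is, when each part of each split of $\Sigma(S)$ is a contiguous arc of $c$, that is, when $c$ is consistent with $S$. Because $S$ is $1$-nested it is circular, so such orderings $c$ exist, and every one of them realizes the common minimal value $2W$.

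Next I would transport this to BME($n,k$). By (the proof of) Theorem~\ref{cool} every vertex of BME($n,k$) is $\mathbf{x}(S')=\sum_{c}\mathbf{x}(c)$, the sum taken over the circular orderings $c$ consistent with $S'$ (write $c\sim S'$ for this), and the row-sum relations in that proof give $\langle\mathbf{1},\mathbf{x}(S')\rangle=n\,2^{k}$; since $\langle\mathbf{1},\mathbf{x}(c)\rangle=n$ for any tour, it follows that each binary level-$1$ network with $k$ bridges satisfies $\#\{c: c\sim S'\}=2^{k}$. Hence $\langle\mathbf{d}_S,\mathbf{x}(S')\rangle=\sum_{c\sim S'}\langle\mathbf{d}_S,\mathbf{x}(c)\rangle\ge 2^{k}\cdot 2W=2^{k+1}W$, a bound \emph{independent of $S'$}, so $\langle\mathbf{d}_S,\mathbf{x}\rangle\ge 2^{k+1}W$ is a valid inequality for BME($n,k$); I take $F(S)$ to be the face on which equality holds. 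As the $2^{k}$ summands are each at least $2W$ and total $2^{k+1}W$ only when each equals $2W$, a vertex $\mathbf{x}(S')$ lies on $F(S)$ if and only if every circular ordering consistent with $S'$ is consistent with $S$.

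It remains to recognize the condition ``every $c\sim S'$ is consistent with $S$'' as ``$\Sigma(S)\subset\Sigma(S')$''. One direction is immediate, since an ordering consistent with a split network is consistent with every subsystem. For the converse I would use the maximality of the split system $\Sigma(S')$ attached to a binary level-$1$ network $S'$ (see \cite{Gambette2017}): if a split $\sigma$ were consistent with every circular ordering of $S'$ but failed to lie in $\Sigma(S')$, then $\Sigma(S')\cup\{\sigma\}$ would be a strictly larger circular split system consistent with the very same orderings, which cannot occur. Feeding each $\sigma\in\Sigma(S)$ through this gives $\Sigma(S)\subset\Sigma(S')$, so the vertices of $F(S)$ are exactly the $\mathbf{x}(S')$ for binary level-$1$ $k$-bridge networks $S'$ refining $S$. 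Finally one must check $F(S)\neq\varnothing$ for every $k\le m$: given any $c\sim S$, I would produce a binary level-$1$ refinement of $S$ compatible with $c$ having exactly $k$ bridges, by fully reticulating the interiors of the cells into which the bridges of $S$ cut the polygon and then demoting $m-k$ of the bridges of $S$ one at a time (adjoining to each a split that crosses it, then re-reticulating); this is where the hypothesis $k\le m$ is consumed, and where a little care is needed because ``cherry'' bridges cannot be demoted by a single crossing split and must instead be absorbed into an adjacent blob.

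The conceptual crux is the maximality input in the third paragraph --- that a split consistent with every circular ordering of a binary level-$1$ network already belongs to its associated split system. Proving this cleanly forces one to use the description of a binary level-$1$ network as a tree of bridges and fully reticulated blobs and to argue that no extra split can be threaded ``across'' that structure without killing some consistent ordering; this is exactly the step that turns ``$F(S)$ contains all those vertices'' into ``$F(S)$ is spanned by exactly those vertices.'' The fiddlier point is the nonemptiness construction, i.e. realizing every bridge count $k\le m$; the other ingredients --- the tour-length identity, the count $2^{k}$ inherited from Theorem~\ref{cool}, and the easy half of the containment --- are routine.
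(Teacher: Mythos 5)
Your proof follows essentially the same strategy as the paper's: both exhibit $F(S)$ as the minimizing face of the linear functional given by a distance vector derived from a (positively weighted) split system representing $S$, with the same threshold value $2^{k+1}W$. The difference is one of rigor rather than of route. Where the paper simply asserts that ${\mathbf x}(s')\cdot{\mathbf d}_s \ge 2^{k+1}W(s)$ ``is an equality precisely when $s'$ refines $s$,'' your decomposition of each vertex as $\sum_{c\sim S'}{\mathbf x}(c)$ over the $2^k$ consistent tours, combined with the per-split bound $\delta_c(A|B)\ge 2$, actually proves both the validity of the inequality and the characterization of the equality case; this is a genuine improvement in transparency, and it correctly isolates the one nontrivial ingredient --- that a split consistent with every circular ordering of a binary level-$1$ network $S'$ already lies in $\Sigma(S')$ --- which the paper uses implicitly without comment. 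You also address nonemptiness of $F(S)$ for each $k\le m$, which the paper omits entirely. The only caveat is that the two steps you yourself flag as the crux (the maximality lemma for $\Sigma(S')$ and the bridge-demotion construction) are sketched rather than proved; since the paper's own proof silently assumes the first and never raises the second, your write-up is at least as complete as the original, but a referee would want those two lemmas carried out in full before calling the argument closed.
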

For examples see Figure~\ref{f:filter}, where shaded subfaces are labeled by the corresponding network with additional bridges. Those subfaces link up to make an interesting complex.  The topology of these complexes in general is an open question.
\begin{proof} Without loss of generality we choose a split network $s$ which has the exterior form of $S,$ that is $L(s) = S.$
Let $s$ be weighted by assigning the value of 1 to each split. Then $W(s)$ is the total number of splits in $s$. Let ${\mathbf d}_s$ be the distance vector derived from that weighting so that the $i,j$ component of ${\mathbf d}_s$ is the number of splits between those leaves on $s.$ We see that the dot product ${\mathbf x}(s')\cdot{\mathbf d}_s$ is minimized simultaneously at each of the $k$-bridge networks $s'$ which externally refine $s$. In fact we have that the following inequality
$$
{\mathbf x}(s')\cdot{\mathbf d}_s \ge 2^{k+1}W(s)
$$
holds for all $k$-bridge externally refined networks $s'$, and is an equality precisely when $s'$ refines $s.$
The reason is that ${\mathbf d}_s$ is equivalent to a distance vector derived from $s'$, where the splits are given weight $= 1$ if they are also in $s$, and weight $= 0$ if not. Thus the dot product will equal $2^{k+1}W(s)$, and that value will be a minimum.
\end{proof}

We have found that all of the known facets of BME($n$) have analogues in BME($n,k$) for small $n.$ For large $n$ we also see the following:

\begin{thm}
Any split $A|B$ of $[n]$ with $|A|>1$, $|B|>1$ corresponds to a face of BME($n,k$), for all $n,k$ with $ k\le n-3$.  The vertices of that face are the binary level-1
 networks which display the split $A|B.$ Furthermore, if $|A|>2$, $|B|>2$, the face is a facet of the polytope.  The face inequality is:
 $$\sum_{i,j\in A} x_{ij} \le (|A|-1)2^k.$$
 \end{thm}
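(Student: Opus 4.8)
The plan is to show first that the displayed inequality is valid on every vertex ${\mathbf x}(s)$ of BME($n,k$), with equality exactly on the networks displaying $A|B$, and then to settle facetness by a dimension count. For the valid-inequality part I would argue as in the proof of Theorem~\ref{cool}: write ${\mathbf x}(s)=\sum_{c}{\mathbf x}(c)$, the sum over the circular orderings $c$ consistent with $s$ of the corresponding STSP vertices. For a single ordering $c$, the quantity $\sum_{i,j\in A}x_{ij}(c)$ counts the cyclically adjacent pairs inside $A$, which equals $|A|-p$ where $p\ge 1$ is the number of maximal arcs into which the elements of $A$ fall; hence it is $\le|A|-1$, with equality exactly when $A$ occupies one arc of $c$. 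Since $\sum_{i\ne j}x_{ij}(c)=2n$ for a single ordering while $\sum_{j}\sum_{i\ne j}x_{ij}(s)=n\,2^{k+1}$ by Theorem~\ref{cool}, there are exactly $2^{k}$ consistent orderings, so summing the per-ordering bound gives $\sum_{i,j\in A}x_{ij}(s)\le(|A|-1)2^{k}$, with equality iff $A$ is a single arc in \emph{every} circular ordering consistent with $s$. (Modulo the $n$ defining equations $\sum_{i\ne j}x_{ij}=2^{k+1}$ this is precisely the face the preceding theorem attaches to the one-split network whose only nontrivial split is $A|B$, here obtained for all $k\le n-3$ rather than only $k\le 1$.)

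The heart of the first two assertions is then a purely geometric lemma: $A$ is a single arc in every circular ordering consistent with $s$ if and only if $L(s)$ displays $A|B$, i.e.\ $A|B\in\Sigma(L(s))$. The ``if'' direction — the one needed to see that the hyperplane supports a face — I would prove by taking a minimal cut of $L(s)$ realizing $A|B$ and, in any planar drawing of $L(s)$ whose boundary reads off a consistent ordering, crossing the one or two cut-edges by a single Jordan arc; because all leaves lie on the outer boundary, this arc splits the boundary circle into an arc carrying $A$ and an arc carrying $B$, so $A$ is contiguous. For ``only if'' I would use the structure, from \cite{Gambette2017}, of the orderings consistent with $s$: they form a single orbit under flipping the two sides of each bridge of $L(s)$ and reflecting each cycle, so a leaf set that remains an arc under all of these must be one side of a single bridge-cut or cycle-cut, hence a split in $\Sigma(L(s))$. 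Granting the lemma, the vertices of BME($n,k$) on the supporting hyperplane are exactly the ${\mathbf x}(s)$ with $L(s)$ displaying $A|B$; for $|A|,|B|>1$ this face is proper and nonempty, a $k$-bridge binary level-1 witness existing for each $k\le n-3$ (place $A|B$ on a bridge and split the remaining $k-1$ bridges between the two sides, which accommodate up to $|A|-2$ and $|B|-2$ of them; for $k=0$ use a single cycle).

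For facetness I assume $|A|,|B|>2$. The face being proper, its dimension is at most $\binom{n}{2}-n-1$, so I must exhibit $\binom{n}{2}-n$ affinely independent network vertices on it. Every vertex displaying $A|B$ \emph{by a bridge} is a gluing along that bridge of a binary level-1 network $s_A$ on $A\cup\{u\}$ with $k_A$ bridges and one $s_B$ on $B\cup\{v\}$ with $k_B$ bridges, where $k_A+k_B=k-1$, $0\le k_A\le|A|-2$, $0\le k_B\le|B|-2$; reading ${\mathbf x}$ off $L$ shows its $A\times A$ block is $2^{k_B+1}$ times the $A\times A$ part of ${\mathbf x}(s_A)$, its $B\times B$ block is $2^{k_A+1}$ times the $B\times B$ part of ${\mathbf x}(s_B)$, and its $A\times B$ block is the outer product of the components $x_{iu}(s_A)$ with the components $x_{vj}(s_B)$. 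The plan is to fix an admissible $(k_A,k_B)$ (one exists whenever $k\le n-3$), let $s_A$ range over a vertex set realizing the dimension of BME($|A|+1,k_A$) and $s_B$ over one realizing the dimension of BME($|B|+1,k_B$), and then add a few vertices from a second choice of $(k_A,k_B)$ and, if necessary, from cycle-displayed networks, to fill out the affine hull; the hypothesis $|A|,|B|>2$ is exactly what makes both smaller polytopes genuinely positive-dimensional, which is what lets the count close. The main obstacle is this last step: controlling the affine independence of the glued vectors — verifying that the bi-affine gluing map collapses no dimension and that varying the bridge split $(k_A,k_B)$ supplies exactly the missing directions — is the real work, and I would organize it as in \cite{bmenk1}.
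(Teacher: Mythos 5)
Your treatment of the supporting inequality is sound and in fact more explicit than what the paper records: decomposing ${\mathbf x}(s)$ into the $2^k$ consistent tour vectors, applying the per-tour subtour bound $\sum_{i,j\in A}x_{ij}(c)=|A|-p\le|A|-1$, and characterizing equality by contiguity of $A$ in every consistent ordering is the right mechanism, and your lemma identifying ``$A$ contiguous in every consistent ordering'' with ``$A|B\in\Sigma(L(s))$'' is exactly the bridge needed to match the theorem's description of the vertex set of the face. That much I would accept (the only-if direction of the lemma deserves a careful write-up, but the bridge-flip description of the consistent orderings does carry it).

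The genuine gap is in the facetness claim, and you have named it yourself. Your plan is to produce $\binom{n}{2}-n$ affinely independent vertices by gluing networks on $A\cup\{u\}$ and $B\cup\{v\}$ along the bridge realizing $A|B$; but the $A\times B$ block of the glued vector is the outer product $x_{iu}(s_A)\,x_{vj}(s_B)$, and bilinear maps of this kind do not in general send affinely independent families to affinely independent families, so the dimension count does not close as stated --- deferring ``the real work'' to \cite{bmenk1} leaves the facet assertion unproved. The paper avoids this construction entirely with a squeeze argument you should compare against: the split is already known to give a facet of BME($n,0$) $=$ STSP($n$) (the classical subtour-elimination facet) and of BME($n,n-3$) $=$ BME($n$); each vertex of the candidate face $F_A(n,k)$ is the sum of exactly two vertices of $F_A(n,k-1)$, and since sums of pairs of vectors drawn from a set of affine dimension $d$ lie in an affine subspace of dimension $d$, the dimension of $F_A(n,k)$ is non-increasing in $k$. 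A non-increasing integer sequence equal to $\binom{n}{2}-n-1$ at both endpoints $k=0$ and $k=n-3$ is constant, so every intermediate face is a facet. This interpolation costs one combinatorial fact (the pairing of $k$-bridge vertices with pairs of $(k-1)$-bridge vertices on the same face) but entirely dispenses with the affine-independence computation that is the sticking point of your route; if you want to keep your direct construction, that computation is precisely what you still owe.
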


\begin{proof}
First we show that the collection of vertices corresponding to networks displaying a split $A|B$ of $[n]$ obey our linear equality, and that all other vertices obey the corresponding inequality. Then we use the fact that for any polytope, its scaling by $m$ is of the same dimension. Equivalently, taking sets of $m$ vectors all from the same facet of a polytope, the vector sums of $m$ such vectors will all lie in an affine space of the same dimension as that facet. (Thus any subset of those sums of $m$ vectors each will have a convex hull of smaller or equal dimension than the original facet.)

We have that a given split, with both parts larger than two, corresponds to a facet of STSP($n$) = BME($n,0$) and also to a facet of BME($n,n-3$) = BME($n$). The vertices of the proposed split-facet $F_A(n,k)$ are each formed by vector summing exactly two vertices of $F_A(n,k-1).$

Now we see that from $k=0$ to $k=n-3$ at each step we find that the facet $F_A(n,k)$ cannot be of greater dimension than the  facet $F_A(n,k-1).$ Since the dimension cannot increase at any step, and it has the same value for $k=0$ and for $k=n-3$,  then it must remain constant for each $k$ at ${n \choose 2} - n - 1.$
\end{proof}

We know the case of splits with one part of size two corresponds to a facet when $k=0$, the STSP($n$), but not when $k=n-3$, the BME($n$). It is an open question for which other $(n,k)$ pairs do the splits of size two correspond to facets of BME($n,k$.) We conjecture this for all $k<n-3,$ but we can only report the positive result for $n=5.$

\begin{figure}[h!]
\begin{center}
\includegraphics[width=\textwidth]{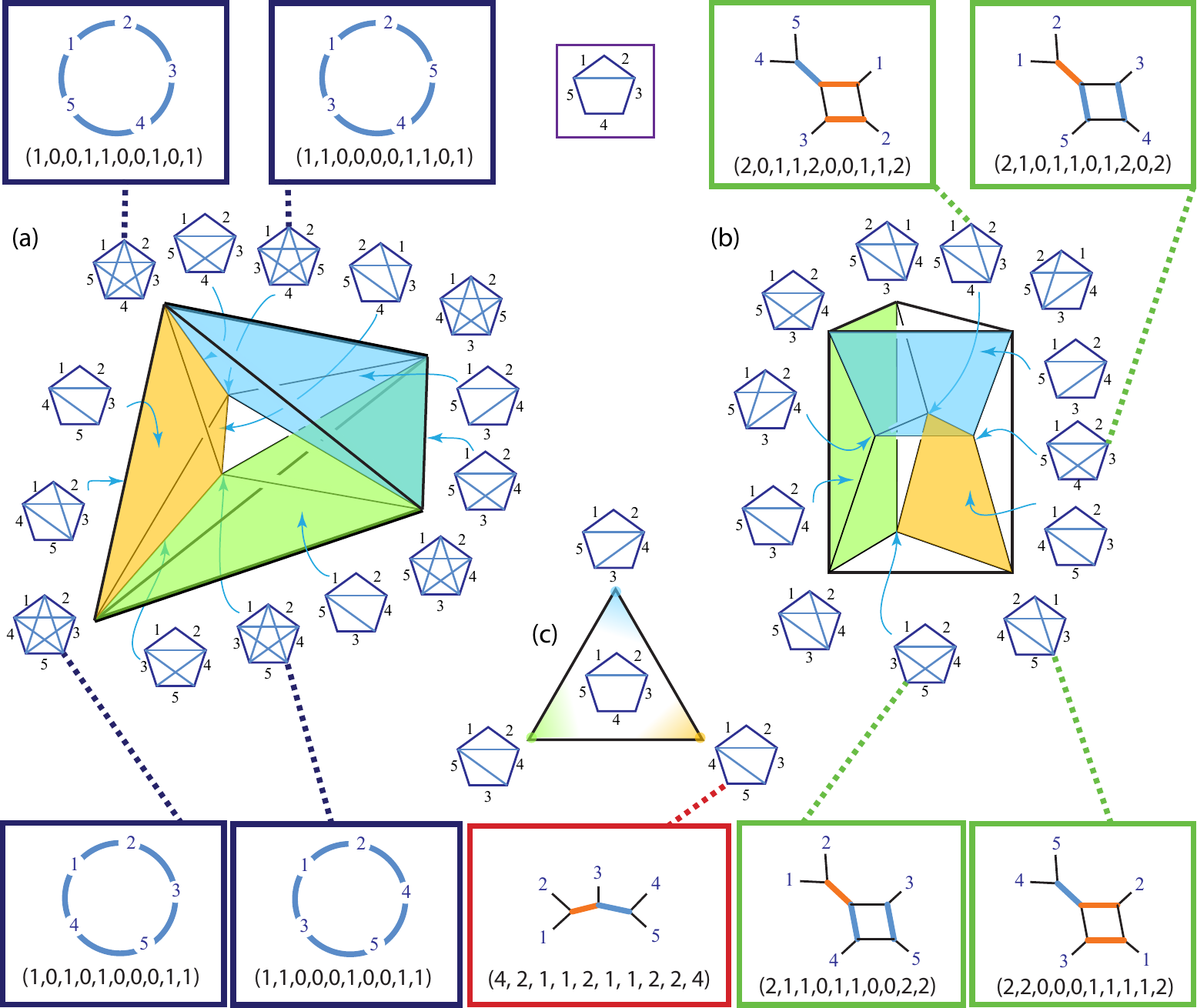}
\end{center}
\caption {A facet (a) in STSP(5) = BME(5,0) , a facet (b) in BME(5,1), and a face (c) in BME(5) = BME(5,2).  All three correspond to the same split, pictured at the top.}
\label{f:filter}
\end{figure}

Figure~\ref{f:filter}(a) shows a subtour elimination facet  of STSP(5) = BME(5,0) corresponding to the split $s=\{\{1,2\},\{3,4,5\}\}.$ Split networks label subfaces of this facet. Figure~\ref{f:filter}(b) shows the corresponding split facet of BME(5,1) in which the vertices are nine networks with a single bridge that each refine $s$. Figure~\ref{f:filter}(c) shows the face of BME(5) corresponding to the same split. Summing either of the horizontal or  vertical pairs of vectors shown in (a) gives the four vectors shown in (b).  Summing all four vectors in (a) gives the vector shown in (c).

%\begin{rem}
%For a general network $s$ with bridge number $k$, embedded %in each polytope BME($n,j$) for $k > j > 0$, is a collection %of networks  which resolve to our given network $s$. Each of %these new networks correspond to a distinct face in %STSP($n$). Those faces link up to make an interesting %complex, as shown by the shaded subfaces in %Figure~\ref{f:filter}.  The topology of these complexes in %general is an open question.
%\end{rem}

%%%%%%%%%%%%%%%%%%%%%%%%%%%%%%%%%%%%%%%%%%%%%%%%%%%%%%%%%%%%%%%%%%%%%%%%%%%%%%%%%%%%%
\section{Facets of BME($n$) and Duality with BHV${}_n$}

Until our recent work, only certain of the low-dimensional faces of BME($n$) were known, namely the \textit{clade faces} as described in \cite{Rudy}. Now we have discovered exponentially large collections of (maximum dimensional) facets for all $n$ \cite{forcey2015facets}.  In the  list below,  we review our new facets and point out how they often fit yet another larger pattern. Namely, faces of the BME($n$) polytope often arise as vertex collections corresponding to binary trees which  display any maximal compatible subset of the splits of a given circular network.
These collections of trees have a biological application: The binary trees which  display any maximal compatible subset of the splits of a given network are known as the fully resolved trees allowed by the network. The number of these for a given network on $n$ leaves, divided by the number of binary phylogenetic trees possible with $n$ leaves, gives us the cladistic information content (CIC) of the tree as defined in \cite{Gauthier2007}.

\begin{enumerate}
\item Any split of $[n]$ with both parts larger than 3 corresponds to a facet of BME$(n)$, with vertices all the trees displaying that split.  These vertices constitute the CIC collection  of that split. The number of these facets grows like $2^n.$

\item A \emph{cherry} is a clade with two leaves. For each intersecting pair of cherries $\{a,b\},\{b,c\}$, there is a facet of BME$(n)$ whose vertices correspond to trees having either cherry. The facet inequality is $x_{ab}+x_{bc} -x_{ac}\le 2^{n-3}$. These vertices constitute the CIC collection corresponding to a certain network in CSN${}_n$.

\item For each pair of leaves $\{i,j\}$, the caterpillar trees with that pair fixed at opposite ends constitute the vertices of a facet. These bound BME$(n)$ from below: $x_{ij} \ge 1$.
\end{enumerate}

Of course, the number of facets of BME($n$) grows much more quickly with $n$ than the number of facets we know. But these faces are still interesting and useful.  One reason is that we have discovered that the facet structure of BME($n$) has a direct relationship to the quotient structure of BHV${}_n$. Let $\mathcal{L}$(BME($n$)) be the poset of faces of the BME polytopes, and let $\mathcal{L}$(BHV${}_n$) be the poset of cells of BHV${}_n$, both ordered by inclusion.

\begin{thm}
There exists a poset injection:   $f:\mathcal{L}$(BHV${}_n$) $\to$  $\mathcal{L}($BME$(n)^{\Delta})$. In particular
the $(2n-5)!!$ top-dimensional cells of BHV${}_n$ map to the $(2n-5)!!$ vertices of BME($n$).
\end{thm}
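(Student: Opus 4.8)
\emph{Strategy.} The plan is to produce the injection in two stages. First I will build an \emph{order-reversing} injection $G\colon\mathcal L(\BHV n)\to\mathcal L(\mathrm{BME}(n))$ whose image consists of the faces of $\mathrm{BME}(n)$ cut out by the split inequalities of Section~3; then I will compose $G$ with the standard anti-isomorphism $\mathcal L(\mathrm{BME}(n))\xrightarrow{\ \sim\ }\mathcal L(\mathrm{BME}(n)^{\Delta})$ between the face lattice of a polytope and that of its polar dual, which converts order-reversal into order-preservation and gives the desired $f$. The first thing to nail down is a combinatorial model of $\mathcal L(\BHV n)$: each cell of $\BHV n$ is a face of one of the orthants $\R^{\,n-3}_{\ge 0}$, obtained by contracting some internal edges of the associated binary tree, and two such faces are glued exactly when they carry the same set of nontrivial splits. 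Hence the cells of $\BHV n$ are in inclusion-preserving bijection with compatible systems of nontrivial splits of $[n]$, equivalently with leaf-labeled trees $S$ on $[n]$ (multifurcations allowed) ordered by refinement, the dimension of the cell of $S$ being the number of nontrivial splits of $S$; in particular the $(2n-5)!!$ top cells correspond to the binary trees.

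\emph{Defining $G$.} For a nontrivial split $A|B$ of $[n]$, the $k=n-3$ instance of the split-face theorem of Section~3 (applicable since $n-3\le n-3$) shows that the binary trees displaying $A|B$ are precisely the vertices of a face $F(A|B)$ of $\mathrm{BME}(n)$. Given a tree $S$ with nontrivial splits $S_1,\dots,S_\ell$, set $G(S):=F(S_1)\cap\cdots\cap F(S_\ell)$, with the convention $G(\text{star tree}):=\mathrm{BME}(n)$. Since an intersection of faces of a polytope is again a face and the vertex set of such an intersection is the intersection of the vertex sets, the vertices of $G(S)$ are exactly the binary trees displaying every $S_i$, i.e.\ the binary refinements of $S$. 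Such refinements always exist, so $G(S)$ is a nonempty face, and $G(S)$ depends only on the split set $\Sigma(S)$, hence only on the cell of $S$ in $\BHV n$; thus $G$ is well defined on $\mathcal L(\BHV n)$.

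\emph{Poset properties and conclusion.} Order-reversal is immediate: if the cell of $S$ lies in the closure of the cell of $S'$ then $\Sigma(S)\subseteq\Sigma(S')$, so $G(S')=\bigcap_{S_i\in\Sigma(S')}F(S_i)\subseteq\bigcap_{S_i\in\Sigma(S)}F(S_i)=G(S)$. Injectivity uses the standard fact that a compatible split system is recovered as the intersection of the split sets of all binary trees refining it; so $G(S)=G(S')$ forces $\{$binary refinements of $S\}=\{$binary refinements of $S'\}$, hence $\Sigma(S)=\Sigma(S')$, hence the same cell. Composing with the polar anti-isomorphism, $f(\text{cell}(S)):=G(S)^{\diamond}$ is an order-preserving injection $\mathcal L(\BHV n)\to\mathcal L(\mathrm{BME}(n)^{\Delta})$ (one checks just as easily that it is an order embedding). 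Finally, if $t$ is binary then $t$ is the only tree refining $t$, so $G(t)$ is a face with a single vertex, namely $\mathbf{x}(t)$; under the polar anti-isomorphism the $(2n-5)!!$ vertices $\mathbf{x}(t)$ of $\mathrm{BME}(n)$ become the $(2n-5)!!$ facets of $\mathrm{BME}(n)^{\Delta}$, and these are precisely the images of the $(2n-5)!!$ top cells of $\BHV n$, as claimed. The one place needing genuine care is the identification of the cell poset of $\BHV n$ together with the verification that $G$ descends through the gluing (``same split system $\Rightarrow$ same cell'') and that the vertex set of the intersection $G(S)$ is exactly the set of binary refinements of $S$; the polytope input — that each $F(A|B)$ is a face with the stated vertices — is already furnished by the Section~3 theorem.
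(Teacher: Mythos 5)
Your proof is correct, but it reaches the face $f(t)$ by a different route than the paper. The paper's argument is a direct normal-vector construction: for each (possibly non-binary) tree $t$ it takes the unit-weight distance vector $\mathbf{d}_t$ and verifies the explicit inequality $\sum_{i<j} d_{ij}(t)\,x_{ij}(t') \ge 2^{n-2}|E(t)|$, with equality exactly when the binary tree $t'$ refines $t$; the face is then by definition the one supported by this hyperplane, and the poset and injectivity claims are read off from the vertex sets. You instead realize $f(t)$ as the intersection $\bigcap_{\sigma}F(\sigma)$ of the single-split faces over the nontrivial splits $\sigma$ of $t$, invoking the Section~3 split-face theorem at $k=n-3$ together with the general lattice facts that an intersection of faces is a face and that its vertex set is the intersection of the vertex sets. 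The two constructions actually produce the same face, since the paper's $\mathbf{d}_t$ is the sum over $\sigma\in\Sigma(t)$ of the single-split normals and the minimizing face of a sum of functionals is the intersection of the individual minimizing faces whenever that intersection is nonempty (which you correctly note it is, because binary refinements always exist). What the paper's version buys is the explicit supporting inequality and normal vector, which feed directly into the linear-programming applications of Section~5 and keep the proof uniform with the other face theorems; what your version buys is modularity (everything reduces to the already-proved single-split case) and a cleaner treatment of the bookkeeping the paper leaves implicit --- the identification of cells of $\BHV{n}$ with compatible split systems, the injectivity argument via recovering $\Sigma(t)$ from its binary refinements, the handling of the cone point, and the explicit passage through the polar anti-isomorphism to land in $\mathcal{L}(\mathrm{BME}(n)^{\Delta})$ with the correct order direction.
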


Specifically,  any (non-binary) phylogenetic
tree $t$ corresponds to a face $f(t)$ of BME$(n)$ which contains the vertices corresponding to binary trees that refine $t$. For an example, consider the triangular face in Figure~\ref{f:filter}(c). Further, these faces are ordered by inclusion exactly  opposite the inclusion in BHV${}_n.$
For each (non-binary) $t$, there is a distance vector
${\mathbf d}_t$ for which the product ${\mathbf d}_t\cdot {\mathbf
x}(t')$ is minimized simultaneously by the set of \emph{binary}
phylogenetic trees $t'$ which refine $t.$ In particular, for any
tree $t'$, we have
$$\sum_{i<j} \ d_{ij}(t) \ x_{ij}(t') \ \ge \ 2^{(n-2)}|E(t)|\, ,$$
where $E(t)$ is the set of edges of $t.$ This inequality is precisely an equality if and only if the tree $t'$
is a refinement of $t.$ Thus the distance vector is the normal to
the face. The map thus described is an injective poset map from BHV${}_n$ to faces in BME($n$), ordered by reverse inclusion (the polar ordering).  Figure~\ref{f:duality} shows an illustration when $n=4.$

\begin{figure}[h]
\begin{center}
\includegraphics[width=\textwidth]{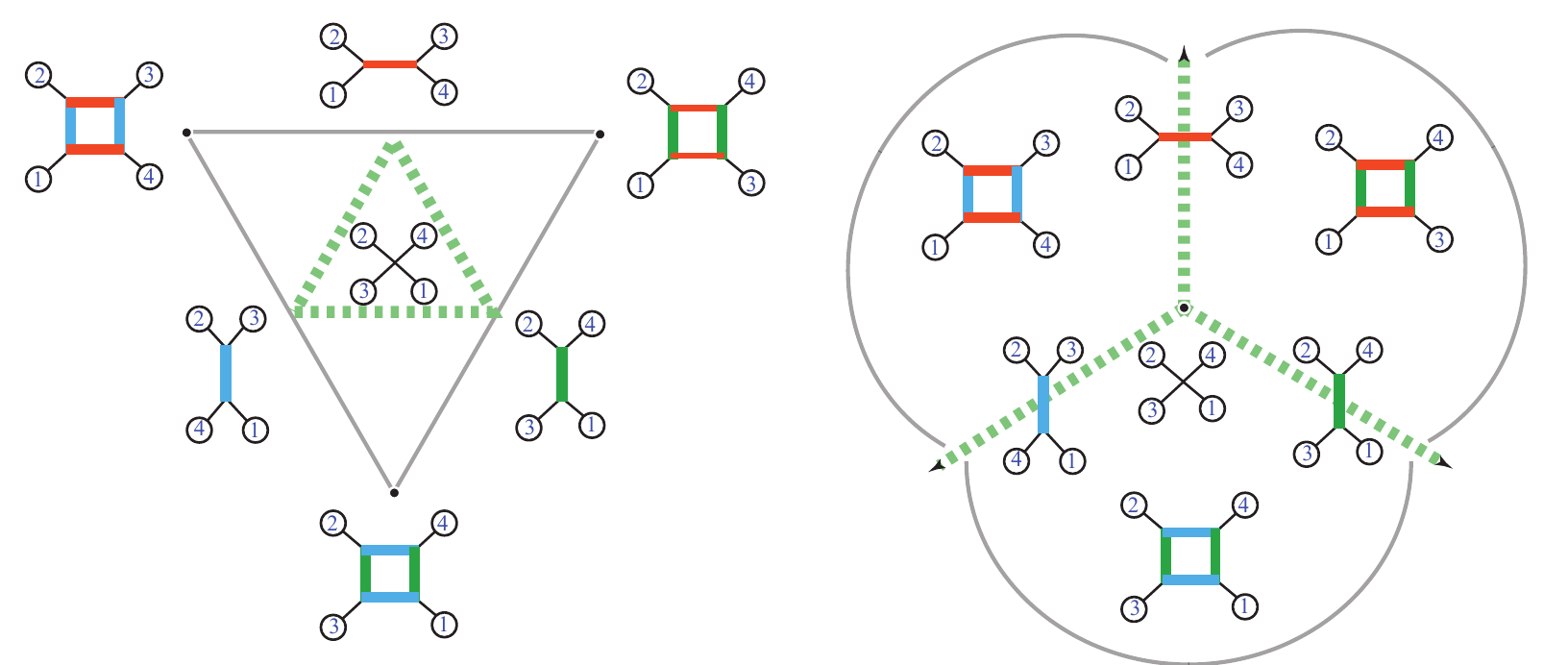}
\end{center}
\caption {STSP(4) = BME(4,0) on the left is dual to CSN(4) on the right. BME(4) = BME(4,1) is dual to BHV(4); these are the restrictions to trees.}
\label{f:duality}
\end{figure}

This discovery allows us to define a projection from Kapranov's
\emph{permutoassociahedron} $\K P_n$ to a complex of the BME$(n)$ polytope,taking faces to faces and preserving the partial order of faces. This polytope $\K P_n$ can be viewed as a polyhedral and combinatorial analog of both tree spaces $\BHV{n}$ and the real moduli space of curves $\M{n}$; see \cite{dev1} for details.  The polytope $\K P_n$ blends
two classical polyotopes:   the \emph{associahedron}  (measuring tree structures) and the permutohedron (measuring permutations of leaves). The details of the map from $\K P_n$ to BME$(n)$ are given in \cite{splito}.

We have extended this discovery of duality to the space of networks CSN${}_n.$ In the following, the projection $f$ defined on networks is the same map $f$ as just described when restricted to trees.

\begin{thm}
The polytope STSP($n$) has a complex of subfaces which is the dual image of a projection $f$ from CSN${}_n.$  The $(n-1)!/2$ orthants of CSN${}_n$ map to the vertices of STSP($n$), and the networks with a single split map to the
the sub-tour elimination facets of STSP($n$).
\end{thm}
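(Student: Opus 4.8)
The plan is to write $f$ down explicitly from the unit-weight construction used above and then verify the four advertised features. A cell of $\sn{n}$ records a circular split system: a face of an orthant is obtained by sending some diagonal lengths to zero, and two such faces are glued in $\sn{n}$ exactly when they record the same system, so we regard the cells of $\sn{n}$ as the circular split systems $s$ (those compatible with at least one cyclic order). To such an $s$ associate the set $C(s)$ of cyclic orders consistent with $s$, and put
$$
f(s) \;=\; \conv\{\mathbf{x}(c) : c\in C(s)\}\;\subseteq\;\mathrm{STSP}(n).
$$

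To see $f(s)$ is a face, weight every split of $s$ by $1$, let $\mathbf{d}_s$ be the resulting distance vector, and let $W(s)$ be the number of splits of $s$. For a cyclic order $c$ write $s_c$ for the maximal circular system of $c$; then $L(s_c)$ is the $n$-marguerite of $c$, so $\mathbf{x}(s_c)=\mathbf{x}(c)$, and the vectors $\mathbf{x}(c)$ are precisely the vertices of $\mathrm{STSP}(n)=\mathrm{BME}(n,0)$. The inequality from the proof of the $1$-nested-network face theorem above, specialized to $k=0$ and to $s'=s_c$, reads $\mathbf{x}(c)\cdot\mathbf{d}_s\ge 2\,W(s)$ with equality precisely when $s_c\supseteq s$, i.e.\ precisely when $c\in C(s)$; hence $\mathbf{d}_s$ is minimized over $\mathrm{STSP}(n)$ exactly on $f(s)$, which is therefore a face with vertex set $\{\mathbf{x}(c):c\in C(s)\}$. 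Since $s\subseteq s'$ forces $C(s')\subseteq C(s)$, we get $f(s')\subseteq f(s)$, so $f$ is order reversing, i.e.\ a poset map from $\mathcal{L}(\sn{n})$ into $\mathcal{L}(\mathrm{STSP}(n)^{\Delta})$. Because $f(s)$ depends only on $C(s)$, it factors through the quotient of the cell poset of $\sn{n}$ that identifies two systems with equal sets of consistent orders (equivalently, with equal circular closure), and on this quotient $f$ is an order-reversing injection onto its image. The quotient is genuinely nontrivial: for $n\ge 6$, the system of all size-$2$ splits of a fixed $n$-cycle already determines the cyclic order, hence has the same image as the maximal system of that order.

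It remains to treat the two boundary cases and to locate the image. The orthant of a cyclic order $c$ is $s_c$; the only order compatible with all $n(n-3)/2$ diagonals of the labeled $n$-gon of $c$ is $c$ itself, so $C(s_c)=\{c\}$ and $f(s_c)=\{\mathbf{x}(c)\}$, giving a bijection from the $(n-1)!/2$ orthants onto the $(n-1)!/2$ vertices of $\mathrm{STSP}(n)$. A single-split network is $s=\{A|B\}$ together with the trivial splits, with $|A|,|B|\ge 2$; then $C(s)$ consists of the orders in which $A$ (equivalently $B$) is contiguous, and comparing vertex sets identifies $f(\{A|B\})$ with the face $\{\mathbf{x}\in\mathrm{STSP}(n):\sum_{i,j\in A}x_{ij}=|A|-1\}$, which is the $k=0$ case of the split-facet theorem above --- a facet of $\mathrm{STSP}(n)=\mathrm{BME}(n,0)$ even when $|A|=2$. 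Distinct splits give distinct facets, so $f$ restricts to a bijection from single-split networks onto the subtour-elimination facets. Finally, whenever $A|B\in s$ we have $\{A|B\}\subseteq s$, hence $f(s)\subseteq f(\{A|B\})$, so every $f(s)$ for nontrivial $s$ sits inside a subtour-elimination facet, while the cone point of $\sn{n}$ maps to all of $\mathrm{STSP}(n)$, consistently with order reversal. This exhibits the image of $f$ as a complex of subfaces inside the subtour-elimination facets, dual to the indicated quotient of $\sn{n}$.

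The geometric input --- that each $f(s)$ is a genuine face with the stated vertex set, together with the two boundary identifications --- is inherited from the $k=0$ specializations of Theorem~\ref{cool} and of the two split-face theorems proved above, so the main obstacle is the bookkeeping on the domain: giving an intrinsic description of the cell structure of the glued space $\sn{n}$, checking that $f$ is constant on glued cells, and pinning down exactly the quotient on which $f$ becomes injective, so that ``dual image of a projection from $\sn{n}$'' is literally correct and matches the quotient-complex statement of the abstract.
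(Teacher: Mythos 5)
Your proposal is correct and follows exactly the route the paper intends: the paper states this theorem without a formal proof, only remarking afterward that ``a network $s$ maps to the face $f(s)$ of STSP($n$) whose vertices are the circular orderings consistent with $s$,'' and your writeup correctly assembles that sketch from the $k=0$ specializations of Theorem~\ref{cool}, the unit-weight inequality ${\mathbf x}(s')\cdot{\mathbf d}_s \ge 2^{k+1}W(s)$ from the 1-nested-network face theorem, and the split-facet theorem. Your additional bookkeeping --- order reversal via $C(s')\subseteq C(s)$, the identification of when $f$ fails to be injective (e.g.\ the size-$2$ splits of a cycle for $n\ge 6$), and the resulting quotient complex --- is a genuine and accurate elaboration of the ``dual to a quotient complex'' claim in the abstract, not a departure from the paper's method.
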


\noindent Equivalently, we could say $f$ projects network space onto a complex in the \textit{fan} of the STSP. This theorem is illustrated in Figures~\ref{f:filter} and~\ref{f:duality}. A network $s$  maps to the face $f(s)$ of STSP($n$) whose vertices are circular orderings consistent with $s.$

%%%%%%%%%%%%%%%%%%%%%%%%%%%%%%%%%%%%%%%%%%%%%%%%%%%%%%%%%%%%%%%%%%%%%%%%%%%%%%%%%%%%%
\section{Applications}

The \emph{balanced minimum evolution} method
reconstructs  phylogenetic trees by minimizing the total tree
length.  This method is \emph{statistically consistent} in that as
the dissimilarity matrix approaches the zero-noise accuracy of an additive
metric, the BME output approaches that tree's true topology.   The BME tree for an arbitrary positive vector $\mathbf{d}$ is the binary tree $t$ that minimizes $\mathbf{d}\cdot\mathbf{x}(t)$ for all binary trees with $n$ leaves,\footnote{If the dissimilarity matrix comes from a tree matrix, this dot
product is simply a scaled sum of all the edge lengths, a sum which is known as the \emph{tree length}.} this \emph{objective value} dot product being the least variance estimate of tree-length \cite{Despernew}. With our discovery of large collections of facet inequalities comes the opportunity to infer the BME tree directly by  linear optimization over a polytope. The problem of course is that we know only a few of the many facet inequalities.

If we take all the split-faces of the BME polytope, including the
cherries and caterpillar facets, the resulting intersection of half-spaces becomes a bounded polytope, appearing inside the ${n \choose 2}$-cube. We can then add the intersecting-cherry facets and other split-network facets and note
that this new polytope envelopes the BME polytope by restricting to
a known subset of the latter polytope's facets. The advantage of
considering this \emph{relaxed BME polytope} is that it shares many
of the same vertices, occasionally allowing linear programming over the
relaxation to work just as well as linear programming over BME$(n)$.
Our algorithm, {\sc PolySplit}, does exactly this and is outlined in \cite{forcey-book}.

We have shown that for an $n$-leaved binary phylogenetic tree, if the number of cherries is at least $n/4$, the tree represents a vertex in the BME polytope  is also a vertex of our relaxation \cite{splito}. For $n\le 11$, the
tree represents a vertex of the relaxation regardless of the number
of cherries.  For BME(5) = BME(5,2), for instance, there are 15 vertices and 52 facets. The relaxed version in 5D has all the same facets minus 12 of them, so 40, and it turns out to possess exactly the same vertices plus 12 new ones. In general, the vertices associated to binary phylogenetic trees for any $n$ all lie on the common boundary of several
facets of the relaxation, which are also facets of the BME polytope. Therefore we believe the relaxation will prove to be useful as a proxy for solving the BME problem via linear optimization.

%%%%%%%%%%%%%%%%%%%%%%%%%%%%%%%%%%%%%%%%%%%%%%%%%%%%%%%%%%%%%%%%%%%%%%%%%%%%%%%%%%%%%
%
%                  References
%
%%%%%%%%%%%%%%%%%%%%%%%%%%%%%%%%%%%%%%%%%%%%%%%%%%%%%%%%%%%%%%%%%%%%%%%%%%%%%%%%%%%%%

\bibliographystyle{amsplain}
\bibliography{phylogenetics}{}

\end{document}